\theoremstyle{plain}
\newtheorem{thm}{Theorem}%[section]
\newtheorem{lemma}[thm]{Lemma}
\newtheorem{lem}[thm]{Lemma}
\theoremstyle{definition}
\newtheorem{ex}[thm]{Example}
\newcommand\cE{{\mathcal{E}}}
\newcommand\cF{{\mathcal{F}}}
\newcommand\cL{{\mathcal{L}}}
\newcommand\cR{{\mathcal{R}}}
\newcommand\cV{{\mathcal{V}}}
\newcommand\cW{{\mathcal{W}}}
\newcommand\RR{\mathbb{R}}
\newcommand\ZZ{\mathbb{Z}}
\newcommand\CC{\mathbb{C}}
\newcommand\NN{\mathbb{N}}
\newcommand\Ker{\mathop{\rm Ker}\nolimits}
\newcommand\End{\operatorname{End}}
\newcommand{\Tr}{\mbox{\rm Tr}}
\begin{document}
\title[Transverse Dirac operators on Riemannian foliations]{Vanishing theorem
for transverse Dirac operators on Riemannian foliations}
\author{Yuri A. Kordyukov}
\address{Institute of Mathematics, Russian Academy of Sciences, 112
Chernyshevsky street, 450077 Ufa, Russia}
\email{yurikor@matem.anrb.ru}\thanks{Supported by the Russian
Foundation of Basic Research (grant no. 06-01-00208)}

\keywords{almost complex structures, Riemannian foliations,
Lichnerowicz formula, transversally elliptic operators, Dirac
operator, vanishing theorems}

\subjclass[2000]{58J50, 58J05, 53D50, 53C12, 53C27}

%\date{\today}

\begin{abstract}
We obtain a vanishing theorem for the half-kernel of a transverse
${\rm Spin}\sp c$ Dirac operator on a compact manifold endowed with
a transversely almost complex Riemannian foliation twisted by a
sufficiently large power of a line bundle, whose curvature vanishes
along the leaves and is transversely non-degenerate at any point of
the ambient manifold.
\end{abstract}
\maketitle \hyphenation{trans-ver-sal-ly}

%\tableofcontents

\bibliographystyle{plain}

\section*{Introduction}
Let $X$ be a compact manifold of dimension $2n$ equipped with an
almost complex structure $J : TX \to TX$, $\cE$ a Hermitian vector
bundle on $X$, and $g_X$ a Riemannian metric on $X$. Assume that the
almost complex structure $J$ is compatible with $g_X$. Consider a
Hermitian line bundle $\cL$ over $X$ endowed with a Hermitian
connection $\nabla^\cL$ such that its curvature
$R^{\cL}=(\nabla^\cL)^2$ is non-degenerate. Thus, $\omega =
\frac{i}{2\pi}R^{\cL}$ is a symplectic form on $X$. One can
construct canonically a ${\rm Spin}\sp c$ Dirac operator $D_k$
acting on
\[
\Omega^{0,*}(X,\cE\otimes \cL^k)=\oplus_{q=0}^n
\Omega^{0,q}(X,\cE\otimes \cL^k),
\]
the direct sum of spaces of $(0,q)$-forms with values in $\cE\otimes
\cL^k$.

Under the assumption that $J$ is compatible with $\omega$, Borthwick
and Uribe \cite{B-Uribe96} proved that, for sufficiently large $k$,
\[
\Ker D^-_k =0,
\]
where $D^-_k$ denotes the restriction of $D_k$ to
$\Omega^{0,odd}(X,\cE\otimes \cL^k)$. This result generalizes the
famous Kodaira vanishing theorem for the cohomology of the sheaf of
sections of a holomorphic vector bundle twisted by a large power of
a positive line bundle. It has interesting applications in geometric
quantization (see \cite{B-Uribe96} and references therein).

In \cite{ma-ma2002}, Ma and Marinescu gave a proof of the
Borthwick-Uribe result, which uses only the Lichnerowicz formula for
the ${\rm Spin}\sp c$ Dirac operator. They also show that, if we put
\[
m=\inf_{u\in T^{(1,0)}_xX, x\in X}
\frac{R^\cL_x(u,\bar{u})}{|u|^2}>0,
\]
then there exists $C>0$ such that, for $k\in \NN$, the spectrum of
$D^2_k$ is contained in the set $\{0\}\cup (2km-C,+\infty)$.

Let us mention that Braverman \cite{Brav,Brav99} generalized the
Borthwick-Uribe vanishing theorem to the case when the almost
complex structure $J$ is compatible with $g_X$, the curvature
$R^{\cL}$ is non-degenerate and $J$-invariant, but not necessarily
compatible with $J$. In \cite{ma-ma2006}, Ma and Marinescu gave a
proof of this result by the methods of \cite{ma-ma2002}.

Our main purpose is to obtain an analogue of the Borthwick-Uribe
vanishing theorem for a transverse ${\rm Spin}\sp c$ Dirac operator
on a compact manifold endowed with a transversely almost complex
Riemannian foliation. Our considerations are based on the approach
of Ma-Marinescu \cite{ma-ma2002}. So we also state a Lichnerowicz
type formula for a transverse Dirac operator on a compact foliated
manifold $(M,\cF)$, which, as we strongly believe, will be of
independent interest.

The transverse Dirac operators for Riemannian foliations were
introduced in \cite{G-K91}. This paper mainly deals with the
transverse Dirac operators acting on basic sections (see also
\cite{G-K91a,G-K93,Jung01,Jung06,Jung07} and references therein).
The index theory of transverse Dirac operators was studied in
\cite{DGKY}. Finally, spectral triples defined by transverse
Dirac-type operators on Riemannian foliations and related
noncommutative geometry were considered in
\cite{noncom,mpag,matrix-egorov}.

The paper is organized as follows. In Section~\ref{s:results}, we
introduce transverse Dirac operators and formulate our main results,
the vanishing theorem for a transverse ${\rm Spin}\sp c$ Dirac
operator on a compact manifold endowed with a transversely almost
complex Riemannian foliation, Theorem~\ref{t:vanish}, and the
Lichnerowicz formula for transverse Dirac operators,
Theorem~\ref{t:Lich}. The proof of the vanishing theorem is given in
Section~\ref{s:van}. Sections~\ref{s:Lich} and~\ref{s:Bochner}
contain the proof of the Lichnerowicz formula and related results.

The author is grateful to M. Braverman and G. Marinescu for useful
remarks.

\section{Preliminaries and main results}\label{s:results}
\subsection{Transverse Dirac operators}
Let $M$ be a compact manifold equipped with a Riemannian foliation
$\cF$ of even codimension $q$ and $g_M$ a bundle-like metric on $M$.
Let $T^H_xM=T_x{\cF}^{\bot}$. $T^HM$ is a smooth vector subbundle of
$TM$ such that
\begin{equation}\label{e:decomp}
TM=T^HM\oplus T\cF.
\end{equation}
There is a natural isomorphism $T^HM\cong Q=TM/T\cF$. Denote by
$P_H$ (resp. $P_F$) the orthogonal projection operator of $TM$ on
$T^HM$ (resp. $T\cF$) associated with the decomposition
(\ref{e:decomp}).

The Riemannian metric $g_M$ gives rise to a metric connection
$\nabla$ in $T^HM$ (called the transverse Levi-Civita connection),
which is defined as follows. Denote by $\nabla^L$ the Levi-Civita
connection defined by $g_M$. Then we have
\begin{equation}\label{e:adapt}
\begin{aligned}
\nabla_XN &=P_H[X,N],\quad X\in C^\infty(M,T\cF),\quad N\in
C^\infty(M,T^HM)\\ \nabla_XN&=P_H\nabla^L_XN,\quad X\in
C^\infty(M,T^HM),\quad N\in C^\infty(M,T^HM).
\end{aligned}
\end{equation}
It turns out that $\nabla$ depends only on the transverse part of
the metric $g_M$ and preserves the inner product of $T^HM$.

For any $x\in M$, denote by $Cl(Q_x)$ the Clifford algebra of $Q_x$.
Recall that, relative to an orthonormal basis
$\{f_1,f_2,\ldots,f_q\}$ of $Q_x$, $Cl(Q_x)$ is the complex algebra
generated by $1$ and $f_1,f_2,\ldots,f_q$ with relations
\[
f_\alpha f_\beta+f_\beta f_\alpha=-2\delta_{\alpha\beta}, \quad
\alpha, \beta=1,2,\ldots,q.
\]

The transverse Clifford bundle $Cl(Q)$ is the $\ZZ_2$-graded vector
bundle over $M$ whose fiber at $x\in M$ is $Cl(Q_x)$. This bundle is
associated to the principal $SO(q)$-bundle $O(Q)$ of oriented
orthonormal frames in $Q$, $Cl(Q)=O(Q)\times_{O(q)}Cl(\RR^q)$.
Therefore, the transverse Levi-Civita connection $\nabla$ induces a
natural leafwise flat connection $\nabla^{Cl(Q)}$ on $Cl(Q)$ which
is compatible with the multiplication and preserves the
$\ZZ_2$-grading on $Cl(Q)$. If $\{f_1, f_2, \ldots , f_q\}$ is a
local orthonormal frame in $T^HM$, and $\omega^\gamma_{\alpha\beta}$
is the coefficients of the connection $\nabla$:
$\nabla_{f_\alpha}f_\beta
=\sum_\gamma\omega^\gamma_{\alpha\beta}f_\gamma$, then
\begin{equation}\label{e:spin}
\nabla^{Cl(Q)}_{f_\alpha} = f_\alpha+\frac{1}{4}\sum_{\gamma=1}^q
\omega^\gamma_{\alpha\beta}c(f_\beta)c(f_\gamma),
\end{equation}
where $c(a)$ denotes the action of an element $a\in Cl(Q)$ on
$C^\infty(M,Cl(Q))$ by pointwise left multiplication.

A transverse Clifford module is a complex vector bundle $\cE$ on $M$
endowed with an action of the bundle $Cl(Q)$. We will denote the
action of $a\in C^\infty(M,Cl(Q))$ on $s\in C^\infty(M,\cE)$ as
$c(a)s\in C^\infty(M,\cE)$.

A transverse Clifford module $\cE$ is called self-adjoint if it
endowed with a leafwise flat Hermitian metric such that the operator
$c(f) : \cE_x\to \cE_x$ is skew-adjoint for any $x\in M$ and $f\in
Q_x$.

Any transverse Clifford module $\cE$ carries a natural
$\ZZ_2$-grading $\cE=\cE_+\oplus\cE_-$ (see, for instance,
\cite{BGV92}).

A connection $\nabla^\cE$ on a transverse Clifford module $\cE$ is
called a Clifford connection if it is compatible with the Clifford
action, that is, for any $f\in C^\infty(M,T^HM)$ and $a\in
C^\infty(M,Cl(Q))$,
\[
[\nabla^{\cE}_f, c(a)]=c(\nabla^{Cl(Q)}_fa).
\]

\begin{ex} Assume that $\cF$ is transversely oriented and the
normal bundle $Q$ is spin. Thus the $SO(q)$ bundle $O(Q)$ of
oriented orthonormal frames in $Q$ can be lifted to a $Spin(q)$
bundle $O'(Q)$ so that the projection $O'(Q)\to O(Q)$ induces the
covering projection $Spin(q)\to SO(q)$ on each fiber.

Let $F(Q), F_+(Q), F_-(Q)$ be the bundles of spinors
\[
F(Q)=O'(Q)\times_{Spin(q)}S, \quad
F_\pm(Q)=O'(Q)\times_{Spin(q)}S_\pm .
\]
Since $\dim Q=q$ is even $\End F(Q)$ is as a bundle of algebras over
$M$ isomorphic to the Clifford bundle $Cl(Q)$. So $F(Q)$ is a
self-adjoint transverse Clifford module. The transverse Levi-Civita
connection $\nabla$ lifts to a leafwise flat Clifford connection
$\nabla^{F(Q)}$ on $F(Q)$.

More generally, one can take a Hermitian vector bundle $\cW$
equipped with a leafwise flat Hermitian connection $\nabla^\cW$.
Then $F(Q)\otimes \cW$ is a transverse Clifford module: the action
of $a\in C^\infty(M,Cl(Q))$ on $C^\infty(M, F(Q)\otimes \cW)$ is
given by $c(a)\otimes 1$ ($c(a)$ denotes the action of $a$ on
$C^\infty(M, F(Q))$). The product connection $\nabla^{F(Q)\otimes
\cW}=\nabla^{F(Q)}\otimes 1 + 1\otimes \nabla^{\cW}$ on $F(Q)\otimes
\cW$ is a Clifford connection.
\end{ex}

\begin{ex}
Another example of a self-adjoint transverse Clifford module
associated with a transverse almost complex structure on $(M,\cF)$,
a transverse Clifford module $\Lambda^{0,*}$, is described in
Section~\ref{s:van1}.
\end{ex}

Let $\cE$ be a self-adjoint transverse Clifford module equipped with
a leafwise flat Clifford connection $\nabla^\cE$. We will identify
the bundle $Q$ and $Q^*$ by means of the metric $g_M$ and define the
operator $D^\prime_\cE$ acting on the sections of $\cE$ as the
composition
\[
C^\infty(M,\cE)\stackrel{\nabla^{\cE}}{\longrightarrow}
C^\infty(M,Q^*\otimes \cE) = C^\infty(M,Q\otimes \cE)
\stackrel{c}{\longrightarrow} C^\infty(M, \cE).
\]
This operator is odd with respect to the natural $\ZZ_2$-grading on
$\cE$. If $f_1,\ldots,f_q$ is a local orthonormal frame for $T^HM$,
then
\[
D^\prime_\cE=\sum_{\alpha=1}^qc(f_\alpha)\nabla^{\cE}_{f_\alpha}.
\]

Let $\tau \in C^\infty(M,T^HM)$ be the mean curvature vector field
of $\cF$. If $e_1,e_2,\ldots,e_p$ is a local orthonormal frame in
$T\cF$, then
\[
\tau=\sum_{i=1}^pP_H(\nabla^L_{e_i}e_i).
\]
The transverse Dirac operator $D_\cE$ is defined as
\[
D_\cE=D^\prime_\cE-\frac12 c(\tau)= \sum_{\alpha=1}^q
c(f_\alpha)\left(\nabla^{\cE}_{f_\alpha}-\frac12 g_M(\tau, f_\alpha)
\right).
\]

Denote by $(\cdot,\cdot)_x$ the inner product in the fiber $\cE_x$
over $x\in M$. Then the inner product in $L^2(M, \cE)$ is given by
the formula
\[
(s_1, s_2)=\int_M (s_1(x),s_2(x))_x\omega_M, \quad s_1, s_2\in
L^2(M, \cE),
\]
where $\omega_M=\sqrt{\det g}\,dx$ denotes the Riemannian volume
form on $M$. As shown in \cite{matrix-egorov}, the transverse Dirac
operator $D_\cE$ is formally self-adjoint in $L^2(M, \cE)$.

\subsection{The vanishing theorem}\label{s:van1}
As above, let $M$ be a compact manifold equipped with a Riemannian
foliation $\cF$ of even codimension $q$, $g_M$ a bundle-like metric
on $M$. Consider a Hermitian line bundle $\cL$ equipped with a
leafwise flat Hermitian connection $\nabla^\cL$.

The curvature of $\nabla^\cL$ is an imaginary valued $2$-form
$R^\cL=(\nabla^\cL)^2$ on $M$. Since $\nabla^\cL$ is leafwise flat,
$R^\cL$ vanishes on $T\cF$, and, therefore, defines a $2$-form
$R^\cL$ on $Q$. If this form is non-degenerate, then it is a
symplectic form on $Q$.

Let $J :Q\to Q$ be an almost complex structure, which is compatible
with $g_Q$ and $R^\cL$. The almost complex structure $J$ defines
canonically an orientation of $Q$ and induces a splitting $Q\otimes
\CC=Q^{(1,0)}\oplus Q^{(0,1)}$, where $Q^{(1,0)}$ and $Q^{(0,1)}$
are the eigenbundles of $J$ corresponding to the eigenvalues $i$ and
$-i$ respectively. We also have the corresponding decomposition of
the complexified conormal bundle $Q^*\otimes \CC=Q^{(1,0)*}\oplus
Q^{(0,1)*}$ and the decomposition of the exterior algebra bundles
$\Lambda (Q^*\otimes \CC)=\oplus_{p,q}\Lambda^{p,q}(Q^*\otimes
\CC)$, where $\Lambda^{p,q}(Q^*\otimes \CC) = \Lambda^p Q^{(1,0)*}
\otimes \Lambda^qQ^{(0,1)*}$. The transverse Levi-Civita connection
$\nabla$ can be written as
\[
\nabla=\nabla^{(1,0)}+ \nabla^{(0,1)}+A,
\]
where $\nabla^{(1,0)}$ and $\nabla^{(0,1)}$ are the canonical
Hermitian connections on $Q^{(1,0)}$ and $Q^{(0,1)}$ respectively
and $A\in C^\infty(T^*M\otimes \End (Q))$, which satisfies $JA=-AJ$.

Consider a self-adjoint transverse Clifford module
\[
\Lambda^{0,*}=\Lambda^{even}Q^{(0,1)*}\oplus
\Lambda^{odd}Q^{(0,1)*}.
\]
The action of any $f\in Q$ with decomposition $f=f_{1,0}+f_{0,1}\in
Q^{(1,0)}\oplus Q^{(0,1)}$ on $\Lambda^{0,*}$ is defined as
\[
c(f)=\sqrt{2}(\varepsilon_{f^*_{1,0}}-i_{f_{0,1}}),
\]
where $\varepsilon_{f^*_{1,0}}$ denotes the exterior product by the
covector ${f^*_{1,0}}\in Q_x^*$ dual to ${f_{1,0}}$, $i_{f_{0,1}}$
the interior product by ${f_{0,1}}$. This module has a natural
leafwise flat Clifford connection $\nabla^{\Lambda^{0,*}}$. The
associated transverse Dirac operator $D_{\Lambda^{0,*}}$ can be
called the transverse ${\rm Spin}\sp c$ Dirac operator.

One can also consider a Hermitian vector bundle $\cW$ equipped with
a leafwise flat Hermitian connection $\nabla^\cW$. Then one get the
twisted transverse Clifford module $\cE=\Lambda^{0,*}\otimes\cW$
equipped with a product leafwise flat Hermitian connection
$\nabla^\cE$ and the associated transverse ${\rm Spin}\sp c$ Dirac
operator $D_{\Lambda^{0,*}\otimes \cW}$.

Consider the transverse ${\rm Spin}\sp c$ Dirac operator
\[
D_k=D_{\Lambda^{0,*} \otimes \cW \otimes \cL^k} :
C^\infty(M,\Lambda^{0,*} \otimes \cW\otimes \cL^k) \to
C^\infty(M,\Lambda^{0,*} \otimes \cW \otimes \cL^k).
\]
Let $D^-_k$ denote the restriction of $D_k$ to the space
$C^\infty(M,\Lambda^{odd}Q^{(0,1)*}\otimes \cW \otimes \cL^k)$. Put
\[
m=\inf_{u\in Q^{(1,0)}_x, x\in M}
\frac{R^\cL_x(u,\bar{u})}{|u|^2}>0.
\]

\begin{thm}\label{t:vanish}
There exists $C>0$ such that for $k\in \NN$, the spectrum of $D^2_k$
is contained in the set $\{0\}\cup (2km-C,+\infty)$. For
sufficiently large $k$
\[
\Ker D^-_k =0.
\]
\end{thm}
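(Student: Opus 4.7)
The plan is to adapt the Bochner approach of Ma and Marinescu~\cite{ma-ma2002} to the foliation setting, with the transverse Lichnerowicz formula of Theorem~\ref{t:Lich} as the central ingredient. Applying that formula to $D_k = D_{\Lambda^{0,*}\otimes\cW\otimes\cL^k}$, and using that the $k$-dependence enters only through the curvature $kR^\cL$ of $\cL^k$, one obtains a decomposition
\[
D_k^2 \;=\; \Delta^\cE \;+\; \tfrac{k}{2}\, c(R^\cL) \;+\; C_0,
\]
where $\Delta^\cE = (\nabla^\cE)^*\nabla^\cE$ is the non-negative transverse Bochner Laplacian and $C_0$ is a zeroth-order self-adjoint operator bounded uniformly in $k$, absorbing the transverse scalar curvature, the curvature of $\cW$, and the mean-curvature corrections introduced by the $-\tfrac{1}{2}c(\tau)$ term in $D_\cE$.

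Next, I would analyze the fiberwise operator $c(R^\cL)$ on $\Lambda^{0,*}$. At each $x\in M$, diagonalize $R^\cL_x$ by a $J$-adapted orthonormal frame $\{w_j\}_{j=1}^n$ of $Q^{(1,0)}_x$ with eigenvalues $a_j(x)\geq m$. A short computation using the explicit action $c(f) = \sqrt{2}(\varepsilon_{f^*_{1,0}} - i_{f_{0,1}})$ from Section~\ref{s:van1} yields
\[
\tfrac{1}{2}\, c(R^\cL)\bigr|_{\Lambda^{0,*}_x} \;=\; \sum_{j=1}^n a_j(x)\bigl(N_j - \tfrac{1}{2}\bigr),
\]
with $N_j = \bar w_j^* \wedge i_{\bar w_j}$. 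The transverse Bochner Laplacian $\Delta^\cE$, acting as a transverse magnetic Laplacian of strength growing linearly in $k$, contributes a Landau-type positive ground-state energy that cancels, to leading order, the negative ``vacuum'' value of $\tfrac{k}{2}c(R^\cL)$ on the $\Lambda^{0,0}$-component. Taken together, the operator $\Delta^\cE + \tfrac{k}{2}c(R^\cL)$ is modelled fiberwise on a harmonic oscillator with parameters $\{ka_j\}$, whose spectrum consists of a single zero ground state (concentrated in the $\Lambda^{0,0}$ sector) and excited eigenvalues bounded below by $2km$.

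Combining these steps via the spectral-gap argument of~\cite{ma-ma2002} yields the spectrum bound $\mathrm{Spec}(D_k^2)\subset\{0\}\cup(2km-C,+\infty)$, with the zero eigenspace asymptotically concentrated in $\Lambda^{0,0}\subset\Lambda^{even}Q^{(0,1)*}$; consequently, for $k$ with $2km>C$, one deduces $\Ker D_k^-=0$.

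The principal technical obstacle will be the transversely elliptic character of $D_k$ and $\Delta^\cE$: classical elliptic spectral theory does not apply directly, and one must rely on the essential self-adjointness of $D_k$ on $L^2(M,\cE)$ from~\cite{matrix-egorov}, together with the refined form of Theorem~\ref{t:Lich}---in particular its precise handling of the mean-curvature cross-terms generated by $-\tfrac{1}{2}c(\tau)$---to make the harmonic-oscillator comparison rigorous in the transverse setting. The fiberwise algebraic computation of $c(R^\cL)$, by contrast, is essentially identical to the integrable case and presents no new difficulty.
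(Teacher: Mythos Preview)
Your proposal has a genuine gap: you have overlooked the integrability-tensor term in Theorem~\ref{t:Lich}. The transverse Lichnerowicz formula reads
\[
D_k^2 = \Delta^{\Lambda^{0,*}\otimes\cW\otimes\cL^k} + k\,c(R^\cL) - c(\cR) + (\text{bounded zeroth-order terms}),
\]
where
\[
c(\cR)=\tfrac{1}{2}\sum_{\alpha,\beta} c(f_\alpha)c(f_\beta)\,\nabla^\cE_{\cR(f_\alpha,f_\beta)}
\]
is a \emph{first-order} leafwise differential operator (here $\cR(f_\alpha,f_\beta)\in C^\infty(M,T\cF)$). It cannot be absorbed into a bounded $C_0$, and since $\nabla^\cE$ on $\cL^k$ carries the factor $k$, there is no a priori reason for $c(\cR)$ to be dominated by the positive terms. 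This is precisely the new difficulty of the foliated setting relative to Ma--Marinescu~\cite{ma-ma2002}.

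Separately, your ``Landau-type ground-state energy'' step --- asserting that $\Delta^\cE$ contributes $k\lambda$ to cancel the vacuum value of $k\,c(R^\cL)$ --- is only a heuristic; a harmonic-oscillator model comparison is not obviously rigorous here, and transverse ellipticity is not the obstacle you think it is. The paper's argument replaces both this step and the $c(\cR)$ problem at once: it proves a lemma bounding the \emph{combination} $\Delta^{\cV\otimes\cL^k}-c(\cR)$ from below by $k\lambda - C$, obtained by applying Theorem~\ref{t:Lich} a second time to an auxiliary twisted Dirac operator $D_{\Lambda^{0,*}\otimes\cV\otimes\cL^k}$, using $\|D\,u\|^2\geq 0$ and the identity $c(R^\cL)=-\lambda$ on $\Lambda^{0,0}$. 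The $c(\cR)$ contributions from the two applications then match exactly, and the Landau lower bound emerges without any oscillator comparison. Your fiberwise diagonalization of $c(R^\cL)$ is correct and is exactly what is used (via \cite[Proposition~7.5]{Brav}) to finish the estimate on $\Lambda^{0,\mathrm{odd}}$.
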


The proof of this theorem will be given in Section~\ref{s:van}.

\subsection{The Lichnerowicz formula}
In this Section, we will formulate the Lichnerowicz formula for a
transverse Dirac operator, which will play a crucial role in the
proof of Theorem~\ref{t:vanish}.

Denote by $\cR$ the integrability tensor (or curvature) of $T^HM$.
It is an element of $C^\infty(M, \Lambda^2T^HM^* \otimes T\cF)$
given by
\[
\cR_x(f_1,f_2)=-P_F[\tilde{f}_1,\tilde{f}_2](x),\quad x\in M, \quad
f_1, f_2\in T^H_xM,
\]
where, for any $f\in T^H_xM$, $\tilde{f}\in C^\infty(M,T^HM)$
denotes any vector field, which coincides with $f$ at $x$.

Since the Levi-Civita connection $\nabla^L$ is torsion-free, for any
$f_1, f_2\in C^\infty(M,T^HM)$, we have
\begin{equation}
\label{e:R} \nabla_{f_1}f_2-\nabla_{f_2}f_1-[f_1,f_2]=\cR(f_1,f_2).
\end{equation}

Let $R$ be the curvature of the transverse Levi-Civita connection
$\nabla$. By definition, $R$ is a section of $\Lambda^2T^*M \otimes
\End (T^HM)$ given by the formula
\[
R(X,Y)=\nabla_{X}\nabla_{Y}-\nabla_{Y}\nabla_{X}-\nabla_{[X,Y]},
\quad X, Y\in C^\infty(M,TM).
\]
If $(B,g_B)$ is a local model for the foliation and $R^B$ is the
curvature of $g_B$, then, for any $f_1, f_2, f_3 \in TB$ with the
corresponding horizontal lifts $f^H_1, f^H_2, f^H_3\in T^HM$, we
have
\[
R(f^H_1,f^H_2)f^H_3=[R^B(f_1,f_2)f_3]^H+P_H([\cR(f^H_1,f^H_2),
f^H_3]).
\]

Denote by $R^{\cE}$ the curvature of the Clifford connection
$\nabla^{\cE}$. By definition, $R^{\cE}$ is a section of
$\Lambda^2T^*_HM \otimes \End (\cE)$ given by the formula
\[
R^{\cE}(f_1, f_2)=\nabla^{\cE}_{f_1}\nabla^{\cE}_{f_2}
-\nabla^{\cE}_{f_2}\nabla^{\cE}_{f_1}-\nabla^{\cE}_{[f_1,f_2]}.
\]
It can be written as
\[
R^{\cE}=c(R)+R^{\cE/S},
\]
where $c(R)\in C^\infty(M,\Lambda^2T^*_HM \otimes Cl(Q))$ is
determined by the curvature $R$ of $\nabla$: If $\{f_1, f_2, \ldots
, f_q\}$ is a local orthonormal frame in $T^HM$, then
\[
c(R)(f_1, f_2)= \frac{1}{4}\sum_{\alpha,\beta} (R(f_1, f_2)f_\alpha,
f_\beta)c(f_\alpha)c(f_\beta),
\]
and $R^{\cE/S}\in C^\infty(M,\Lambda^2T^*_HM \otimes \End_{Cl(Q)}
(\cE))$ is the twisting curvature of $\cE$.

Denote by $(\nabla^{\cE}_X)^*$ the formal adjoint of the operator
$\nabla^{\cE}_X$ with $X\in C^\infty(M,T^HM)$ in $L^2(M,\cE)$.
Observe the following formula:
\begin{equation}\label{e:nabla}
(\nabla^{\cE}_X)^*=-\nabla^{\cE}_X - \operatorname{div} X.
\end{equation}
where $\operatorname{div} X\in C^\infty(M)$ denotes the divergence
of $X$. If $e_1,e_2,\ldots,e_p$ is a local orthonormal frame in
$T\cF$ and $f_1,\ldots, f_q$ is a local orthonormal basis of $T^HM$,
then
\[
\operatorname{div} X= \sum_{k=1}^p g_M(e_k,
\nabla_{e_k}X)+\sum_{\beta=1}^q g_M(f_\beta, \nabla_{f_\beta}X).
\]
In particular, it is easy to see that
\[
\operatorname{div} f_\alpha= - g_M(\tau+
\sum_{\beta=1}^q\nabla_{f_\beta}f_\beta, f_\alpha).
\]
Let $f_1,\ldots, f_q$ be a local orthonormal basis of $T^HM$. Define
the transverse scalar curvature $K$ as
\[
K=\sum_{\alpha, \beta} g(R(f_\alpha, f_\beta)f_\alpha, f_\beta).
\]

\begin{thm}\label{t:Lich}
The following formula holds:
\begin{multline*}
(D_\cE)^2 = \sum_{\alpha=1}^q (\nabla^{\cE}_{f_\alpha})^*
\nabla^{\cE}_{f_\alpha} - \frac12 \sum_{\alpha=1}^q
c(f_\alpha)c(\nabla_{f_\alpha}\tau) -\frac14\|\tau\|^2
\\  + \frac{K}{4} +\frac{1}{2}\sum_{\alpha,\beta}
c(f_\alpha)c(f_\beta)[R^{\cE/S}(f_\alpha,f_\beta)-\nabla_{\cR(f_\alpha,f_\beta)}],
\end{multline*}
where $f_1,\ldots, f_q$ is a local orthonormal basis of $T^HM$.
\end{thm}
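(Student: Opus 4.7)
The plan is to follow the classical derivation of the Lichnerowicz formula, writing $D_\cE = D^\prime_\cE - \tfrac12 c(\tau)$, squaring, and processing each piece with the Clifford connection identity $[\nabla^{\cE}_{f_\alpha}, c(a)] = c(\nabla^{Cl(Q)}_{f_\alpha}a)$ and the Clifford relations, while tracking the corrections that the foliated geometry forces through the mean curvature $\tau$ and the integrability tensor $\cR$. Since $c(\tau)^2 = -\|\tau\|^2$, one has
\[
(D_\cE)^2 = (D^\prime_\cE)^2 - \tfrac12 \{D^\prime_\cE, c(\tau)\} - \tfrac14 \|\tau\|^2,
\]
and a direct computation using $c(f_\alpha)c(\tau) + c(\tau)c(f_\alpha) = -2g_M(\tau,f_\alpha)$ yields
\[
\tfrac12 \{D^\prime_\cE, c(\tau)\} = -\nabla^\cE_\tau + \tfrac12 \sum_\alpha c(f_\alpha)c(\nabla_{f_\alpha}\tau),
\]
producing two of the terms on the right-hand side of the theorem plus a spare $+\nabla^\cE_\tau$ to be absorbed later.

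For $(D^\prime_\cE)^2 = \sum_{\alpha,\beta} c(f_\alpha)\nabla^\cE_{f_\alpha} c(f_\beta)\nabla^\cE_{f_\beta}$, I move $\nabla^\cE_{f_\alpha}$ past $c(f_\beta)$ and split the resulting $c(f_\alpha)c(f_\beta)\nabla^\cE_{f_\alpha}\nabla^\cE_{f_\beta}$ into its symmetric-in-$(\alpha,\beta)$ part $-\sum_\alpha (\nabla^\cE_{f_\alpha})^2$ and antisymmetric part $\tfrac12 \sum c(f_\alpha)c(f_\beta)[\nabla^\cE_{f_\alpha}, \nabla^\cE_{f_\beta}]$. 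The commutator equals $R^\cE(f_\alpha, f_\beta) + \nabla^\cE_{[f_\alpha, f_\beta]}$, and the foliation enters via (\ref{e:R}): $[f_\alpha, f_\beta] = \nabla_{f_\alpha}f_\beta - \nabla_{f_\beta}f_\alpha - \cR(f_\alpha, f_\beta)$. The $\cR$-piece is precisely the $-\tfrac12 \sum c(f_\alpha)c(f_\beta)\nabla_{\cR(f_\alpha,f_\beta)}$ of the theorem, while the two $\nabla f$ pieces, after relabelling and use of the antisymmetry $\omega^\gamma_{\alpha\beta} = -\omega^\beta_{\alpha\gamma}$, combine with the first-order remainder $\sum c(f_\alpha)c(\nabla_{f_\alpha}f_\beta)\nabla^\cE_{f_\beta}$ produced by passing $\nabla^\cE_{f_\alpha}$ past $c(f_\beta)$, collapsing to the single expression $\sum_\beta \nabla^\cE_{\nabla_{f_\beta}f_\beta}$.

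Finally, substituting $-\sum_\alpha (\nabla^\cE_{f_\alpha})^2 = \sum_\alpha (\nabla^\cE_{f_\alpha})^*\nabla^\cE_{f_\alpha} - \nabla^\cE_\tau - \sum_\beta \nabla^\cE_{\nabla_{f_\beta}f_\beta}$, obtained from (\ref{e:nabla}) and the stated formula for $\operatorname{div} f_\alpha$, all remaining first-order derivatives cancel in pairs: the $-\nabla^\cE_\tau$ kills the spare one from the anticommutator, and the $-\sum_\beta \nabla^\cE_{\nabla_{f_\beta}f_\beta}$ kills the one accumulated in the previous step. What is left from the curvature commutator, $\tfrac12 \sum c(f_\alpha)c(f_\beta) R^\cE(f_\alpha, f_\beta)$, splits via $R^\cE = c(R) + R^{\cE/S}$: the twisting part is the last term of the theorem, while the $c(R)$ part should contract to $K/4$ by the standard Bianchi combinatorics. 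I expect this last identity to be the most delicate point, since the usual argument relies on the first Bianchi identity for a torsion-free Levi-Civita connection, whereas $\nabla$ has a defect equal to $\cR$ (valued in $T\cF$). One must verify that the pair symmetry $(R(f_\alpha,f_\beta)f_\gamma,f_\delta) = (R(f_\gamma,f_\delta)f_\alpha,f_\beta)$ on horizontal arguments is nevertheless preserved, using the local description $R(f^H_1, f^H_2) f^H_3 = [R^B(f_1,f_2)f_3]^H + P_H([\cR(f^H_1, f^H_2), f^H_3])$ from Section~\ref{s:results} and the fact that $R^B$ is an honest Riemannian curvature on the local quotient $B$; once the resulting algebraic symmetries are in place, the $K/4$ contraction goes through exactly as in the classical Lichnerowicz computation.
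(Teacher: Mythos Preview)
Your proposal is correct and follows essentially the same route as the paper's proof in Section~\ref{s:Lich}: expand $(D^\prime_\cE)^2$ via the Clifford relations and the Clifford-connection compatibility, use (\ref{e:R}) to produce the $\cR$-term, convert $-\sum_\alpha(\nabla^\cE_{f_\alpha})^2$ to $\sum_\alpha(\nabla^\cE_{f_\alpha})^*\nabla^\cE_{f_\alpha}$ via (\ref{e:nabla}), and then handle the $c(\tau)$ anticommutator; the cancellations you describe are exactly those in the paper. Your treatment of the $K/4$ contraction is in fact more careful than the paper's, which simply asserts the identity $\tfrac{1}{2}\sum_{\alpha,\beta}c(f_\alpha)c(f_\beta)R^{\cE}(f_\alpha,f_\beta)=\tfrac{K}{4}+\tfrac{1}{2}\sum_{\alpha,\beta}c(f_\alpha)c(f_\beta)R^{\cE/S}(f_\alpha,f_\beta)$ without discussing the Bianchi symmetries of the transverse curvature.
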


The proof of this theorem and related results will be given in
Section~\ref{s:Lich}.

\section{Proof of the vanishing theorem}\label{s:van}
The purpose of this Section is to give the proof of
Theorem~\ref{t:vanish}. This proof will make an essential use of
Theorem~\ref{t:Lich}, whose proof will be given later, in
Section~\ref{s:Lich}. First, we introduce some notation.

For any $x\in M$, define the skew-symmetric linear map $K_x : Q_x\to
Q_x$ by the formula
\[
iR^\cL(v,w)=g_Q(v,K_xw), \quad v,w\in Q_x.
\]
The eigenvalues of $K_x$ are purely imaginary: $\pm i\mu_j(x),
j=1,2,\ldots,l$ with $\mu_j(x)>0$. Define
\[
\lambda(x)=\Tr^+ K_x=\mu_1(x)+\ldots+\mu_l(x), \quad m(x)=\min_j
\mu_j(x).
\]
Observe that
\[
m=\min_{x\in M}m(x).
\]
Denote
\[
c(\cR)=\frac{1}{2}\sum_{\alpha,\beta}
c(f_\alpha)c(f_\beta)\nabla_{\cR(f_\alpha,f_\beta)}
\]
and
\[
c(R^\cL)=\frac{1}{2}\sum_{\alpha,\beta}
c(f_\alpha)c(f_\beta)R^{\cL}(f_\alpha,f_\beta).
\]

We start the proof of Theorem~\ref{t:vanish} with the following
lemma, which provides a lower estimate of the transverse metric
Laplacian (cf. \cite[Corollary 2.4]{ma-ma2002} and reference
therein).

\begin{lem}\label{l:1}
Let $\cV$ be a Hermitian vector bundle over $M$ equipped with a
leafwise flat unitary connection $\nabla^\cV$, and $\cL$ a Hermitian
line bundle equipped with a leafwise flat Hermitian connection
$\nabla^\cL$. There exists $C>0$ such that for any $k\in \NN$ the
transverse metric Laplacian
\[
\Delta^{\cV \otimes \cL^k}=\sum_{\alpha=1}^q (\nabla^{\cV \otimes
\cL^k}_{f_\alpha})^* \nabla^{\cV \otimes \cL^k}_{f_\alpha}
\]
satisfies
\[
((\Delta^{\cV \otimes \cL^k}-c(\cR))u,u)\geq k(\lambda u,u)-C\|u\|^2
\]
for any $u\in C^\infty(M,\cV \otimes \cL^k)$.
\end{lem}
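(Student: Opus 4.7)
The plan is to adapt the Bochner--Kodaira commutator estimate of \cite[Cor.~2.4]{ma-ma2002} to the foliated setting, where the $c(\cR)$ term on the left precisely encodes the contribution of the non-integrability of $T^HM$ captured by identity (\ref{e:R}). First, at each point $x \in M$ I would diagonalize the skew-symmetric operator $K_x$ in $2 \times 2$ blocks with eigenvalues $\pm i\mu_j(x)$ and choose a local orthonormal frame $\{f_1,\ldots,f_q\}$ of $T^HM$ adapted to this decomposition. Setting $w_j = \frac{1}{\sqrt{2}}(f_{2j-1} - i f_{2j})$ gives a unitary frame of $Q^{(1,0)}$ satisfying $R^\cL(w_j,\bar w_k) = \mu_j(x)\,\delta_{jk}$, and the Laplacian splits as
\[
\Delta^{\cV\otimes\cL^k} = \sum_j \bigl[(\nabla_{w_j})^*\nabla_{w_j} + (\nabla_{\bar w_j})^*\nabla_{\bar w_j}\bigr].
\]

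Next, I would discard the non-negative summand $\sum_j (\nabla_{\bar w_j})^*\nabla_{\bar w_j}\geq 0$ and handle the remaining $w_j$ part via the commutator identity $(\nabla_{w_j})^*\nabla_{w_j} = \nabla_{w_j}(\nabla_{w_j})^* + [(\nabla_{w_j})^*,\nabla_{w_j}]$. Since $(\nabla_{w_j}(\nabla_{w_j})^* u, u) = \|(\nabla_{w_j})^* u\|^2 \geq 0$, this yields
\[
(\Delta^{\cV\otimes\cL^k} u, u) \geq \sum_j \bigl([(\nabla_{w_j})^*, \nabla_{w_j}] u, u\bigr).
\]
Using formula (\ref{e:nabla}) extended complex-linearly, $(\nabla_{w_j})^* = -\nabla_{\bar w_j} - \operatorname{div}(\bar w_j)$, so this commutator equals $[\nabla_{w_j}, \nabla_{\bar w_j}]$ plus a zero-order term involving $\nabla_{w_j}(\operatorname{div}(\bar w_j))$. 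Expanding $[\nabla_{w_j}, \nabla_{\bar w_j}] = R^{\cV\otimes\cL^k}(w_j, \bar w_j) + \nabla_{[w_j, \bar w_j]}$ and using $R^{\cV\otimes\cL^k} = R^\cV + k R^\cL$, the curvature contribution sums to the leading piece $\sum_j k R^\cL(w_j, \bar w_j) = k\lambda(x)$ plus the bounded term $\sum_j R^\cV(w_j, \bar w_j)$.

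The Lie bracket is then analyzed via (\ref{e:R}):
\[
[w_j, \bar w_j] = \bigl(\nabla_{w_j}\bar w_j - \nabla_{\bar w_j} w_j\bigr) - \cR(w_j, \bar w_j),
\]
splitting $\nabla_{[w_j,\bar w_j]}$ into a horizontal first-order operator $\nabla_{X_j}$ with $X_j := \nabla_{w_j}\bar w_j - \nabla_{\bar w_j}w_j$ uniformly bounded, plus a leafwise contribution $-\nabla_{\cR(w_j,\bar w_j)}$. The horizontal error $\sum_j \nabla_{X_j}$ is absorbed via Cauchy--Schwarz using the identity $(\Delta^{\cV\otimes\cL^k} u, u) = \sum_\alpha \|\nabla^{\cV\otimes\cL^k}_{f_\alpha} u\|^2$, giving a contribution bounded by $\epsilon(\Delta^{\cV\otimes\cL^k} u, u) + C_\epsilon \|u\|^2$ for any $\epsilon > 0$.

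The hard part will be identifying the remaining sum $-\sum_j \nabla_{\cR(w_j, \bar w_j)}$ with the Clifford-weighted expression $-(c(\cR) u, u)$ appearing in the statement. This matching requires a bidegree decomposition of $\cR \in C^\infty(M, \Lambda^2 (T^HM)^* \otimes T\cF)$ in the $J$-adapted frame combined with the Clifford relations $c(w_j)c(\bar w_k) + c(\bar w_k) c(w_j) = -2\delta_{jk}$ and the Clifford compatibility of $\nabla^{\cV\otimes\cL^k}$; the off-diagonal bidegree pieces of $c(\cR)$ should either cancel by the skew-symmetry of the Clifford pairings or reshuffle into bounded zero-order operators. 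Once this identification is established, rearranging and choosing $\epsilon$ small yields the asserted lower bound $((\Delta^{\cV\otimes\cL^k} - c(\cR)) u, u) \geq k(\lambda u, u) - C\|u\|^2$ with $C$ independent of $k$.
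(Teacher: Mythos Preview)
Your approach is essentially correct but takes a genuinely different route from the paper. The paper does \emph{not} carry out the direct Bochner--Kodaira commutator estimate you outline. Instead, it embeds $\cV\otimes\cL^k$ as the degree-zero summand of the transverse Clifford module $\Lambda^{0,*}\otimes\cV\otimes\cL^k$, applies the transverse Lichnerowicz formula (Theorem~\ref{t:Lich}) to the associated ${\rm Spin}^c$ Dirac operator $D_{\Lambda^{0,*}\otimes\cV\otimes\cL^k}$, and then simply uses $\|Du\|^2\geq 0$ together with the identity $c(R^\cL)u=-\lambda u$ on degree-zero sections (formula~(\ref{e:RL1})). All of the commutator bookkeeping you are doing by hand is already packaged inside Theorem~\ref{t:Lich}; this buys a much shorter argument and explains why $c(\cR)$ appears with Clifford weights rather than as a bare trace.

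Two remarks on your version. First, diagonalizing $K_x$ is unnecessary and potentially problematic: the eigenvalues $\mu_j(x)$ need not be smooth where they cross, so your frame $w_j$ may fail to be differentiable. Any smooth local unitary frame of $Q^{(1,0)}$ suffices, since $\sum_j R^\cL(w_j,\bar w_j)=\lambda$ is a trace and hence frame-independent. Second, your ``hard part'' is easier than you suggest once you interpret $c(\cR)$ correctly as the Clifford action on $\Lambda^{0,*}\otimes\cV\otimes\cL^k$ restricted to degree zero: since $c(\bar w_j)$ annihilates degree-zero sections, the only surviving terms in $\frac12\sum c(f_\alpha)c(f_\beta)\nabla_{\cR(f_\alpha,f_\beta)}$ are those of type $c(\bar w_i)c(w_j)$, and the relation $c(\bar w_i)c(w_j)u=-2\delta_{ij}u$ immediately gives $(c(\cR)u,u)=-\sum_j(\nabla_{\cR(w_j,\bar w_j)}u,u)$, with no off-diagonal residue to reshuffle.
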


\begin{proof}
Consider the twisted transverse Clifford module
$\Lambda^{0,*}\otimes \cV \otimes \cL^k$ and the associated twisted
transverse ${\rm Spin}\sp c$ Dirac operator $D_{\Lambda^{0,*}\otimes
\cV \otimes \cL^k}$. By Theorem~\ref{t:Lich}, we have
\begin{multline*}
D_{\Lambda^{0,*}\otimes \cV \otimes \cL^k}^2 =
\Delta^{\Lambda^{0,*}\otimes \cV \otimes \cL^k} - \frac12
\sum_{\alpha=1}^q (c(f_\alpha)c(\nabla_{f_\alpha}\tau)\otimes 1)
-\frac14\|\tau\|^2 + \frac{K}{4} \\
+c(R^{\cV})-c(\cR) +k c(R^\cL),
\end{multline*}
where $f_1,\ldots, f_q$ is a local orthonormal basis of $T^HM$. From
(\ref{e:spin}), we have
\[
\|\nabla^{\Lambda^{0,*}\otimes\cV \otimes \cL^k}u\|^2=\|\nabla^{\cV
\otimes \cL^k}u\|^2+ \frac{1}{16}\| \sum_\gamma
\omega^\gamma_{\alpha\beta}c(f_\beta)c(f_\gamma)u\|^2.
\]
It can be shown (see, for instance, \cite[Lemma 7.10]{Brav}) that,
for any $u\in (\cV \otimes \cL^k)_x \subset (\Lambda^{0,*}\otimes
\cV \otimes \cL^k)_x$
\begin{equation}\label{e:RL1}
c(R^\cL)u = -\lambda u.
\end{equation}
Using (\ref{e:RL1}), we get, for any $u\in C^\infty(M,\cV \otimes
\cL^k)\subset C^\infty(M,\Lambda^{0,*}\otimes \cV \otimes \cL^k)$,
\[
0 \leq \|D_{\Lambda^{0,*}\otimes \cV \otimes \cL^k}u\|^2 \leq
\|\nabla^{\cV \otimes \cL^k}u\|^2 + C\|u\|^2 - (c(\cR)u,u) -k
(\lambda u,u),
\]
that completes the proof.
\end{proof}

By Theorem~\ref{t:Lich}, we have
\begin{multline*}
D_k^2 = \Delta^{\Lambda^{0,*} \otimes \cW \otimes \cL^k} - \frac12
\sum_{\alpha=1}^q c(f_\alpha)c(\nabla_{f_\alpha}\tau)
-\frac14\|\tau\|^2 + \frac{K}{4} \\
+c(R^{\cW})-c(\cR) +k c(R^\cL),
\end{multline*}
where $f_1,\ldots, f_q$ is a local orthonormal basis of $T^HM$.
Therefore, for any $u\in C^\infty(M,\Lambda^{0,*} \otimes \cW
\otimes \cL^k)$, we have
\[
\|D_ku\|^2\geq (\Delta^{\Lambda^{0,*} \otimes \cW \otimes
\cL^k}u,u)- (c(\cR)u,u)+k (c(R^\cL)u,u)-C\|u\|^2.
\]
By Lemma~\ref{l:1}, it follows that
\[
((\Delta^{\Lambda^{0,*} \otimes \cW \otimes \cL^k}-c(\cR))u,u)\geq
k(\lambda
 u,u)-C\|u\|^2.
\]
So we see that
\[
\|D_ku\|^2\geq k(\lambda u,u) +k (c(R^\cL)u,u)-C\|u\|^2.
\]
Finally, by \cite[Proposition 7.5]{Brav}, we have
\[
(c(R^\cL)u,u)_x\geq -(\lambda(x)-2m(x))\|u\|_x^2, \quad u\in
(\Lambda^{odd}Q^{(0,1)*}\otimes \cW \otimes \cL^k)_x.
\]
Therefore, for $u\in C^\infty(M,\Lambda^{odd}Q^{(0,1)*}\otimes \cW
\otimes \cL^k)$, we get
\[
\|D_ku\|^2\geq 2k(m u,u)-C\|u\|^2,
\]
that immediately completes the proof of Theorem~\ref{t:vanish}.

\section{Proof of the Lichnerowicz formula}\label{s:Lich}
In this Section, we derive the Lichnerowicz formula for a transverse
Dirac operator given in Theorem~\ref{t:Lich}. We start with a
computation of $(D^\prime_\cE)^2$:
\begin{align*}
(D^\prime_\cE)^2 =
&\frac{1}{2}[\left(\sum_{\alpha=1}^qc(f_\alpha)\nabla^{\cE}_{f_\alpha}\right)
\left(\sum_{\beta=1}^qc(f_\beta)\nabla^{\cE}_{f_\beta}\right)\\ &
+\left(\sum_{\beta=1}^qc(f_\beta)\nabla^{\cE}_{f_\beta}\right)
\left(\sum_{\alpha=1}^qc(f_\alpha)\nabla^{\cE}_{f_\alpha}\right)]\\
= & \frac{1}{2}\sum_{\alpha,\beta} (c(f_\alpha)c(f_\beta) +
c(f_\beta)c(f_\alpha)) \nabla^{\cE}_{f_\alpha}\nabla^{\cE}_{f_\beta}
\\ & + \frac{1}{2}\sum_{\alpha,\beta} c(f_\beta)c(f_\alpha) (\nabla^{\cE}_{f_\beta}
\nabla^{\cE}_{f_\alpha} -
\nabla^{\cE}_{f_\alpha}\nabla^{\cE}_{f_\beta}) \\ &+
\frac{1}{2}\sum_{\alpha,\beta}[c(f_\alpha)c(\nabla_{f_\alpha}
f_\beta)\nabla^{\cE}_{f_\beta}+c(f_\beta)c(\nabla_{f_\beta}
f_\alpha)\nabla^{\cE}_{f_\alpha}].
\end{align*}
For the first term, we get
\[
\frac{1}{2}\sum_{\alpha,\beta} (c(f_\alpha)c(f_\beta) +
c(f_\beta)c(f_\alpha)) \nabla^{\cE}_{f_\alpha}\nabla^{\cE}_{f_\beta}
=-\sum_{\alpha} (\nabla^{\cE}_{f_\alpha})^2.
\]
For the second term, we get
\begin{multline*}
\frac{1}{2}\sum_{\alpha,\beta} c(f_\beta)c(f_\alpha)
(\nabla^{\cE}_{f_\beta}\nabla^{\cE}_{f_\alpha} - \nabla^{
\cE}_{f_\alpha}\nabla^{\cE}_{f_\beta})\\ =
\frac{1}{2}\sum_{\alpha,\beta} c(f_\beta)c(f_\alpha)
R^{\cE}(f_\beta, f_\alpha)+ \frac{1}{2}\sum_{\alpha,\beta}
c(f_\beta)c(f_\alpha) \nabla^{\cE}_{[f_\beta, f_\alpha]}.
\end{multline*}
Let $\nabla_{f_\alpha}f_\beta=\sum_\gamma
a^\gamma_{\alpha\beta}f_\gamma$. Since $\nabla$ is compatible with
the metric, we have $a^\gamma_{\alpha\beta}
=-a^\beta_{\alpha\gamma}$. Thus we get
\begin{multline*}
\frac{1}{2}\sum_{\alpha,\beta}[c(f_\alpha)c(\nabla_{f_\alpha}
f_\beta)\nabla^{\cE}_{f_\beta}+c(f_\beta)c(\nabla_{f_\beta}
f_\alpha)\nabla^{\cE}_{f_\alpha}]\\
\begin{aligned}
=& \frac{1}{2}\sum_{\alpha,\beta,\gamma} [a^\gamma_{\alpha\beta}
c(f_\alpha)c(f_\gamma)\nabla^{\cE}_{f_\beta}+a^\gamma_{\alpha\beta}
c(f_\beta)c(f_\gamma)\nabla^{\cE}_{f_\alpha}]\\
=& - \frac{1}{2}\sum_{\alpha,\beta,\gamma} [a^\beta_{\alpha\gamma}
c(f_\alpha)c(f_\gamma)\nabla^{\cE}_{f_\beta}+a^\alpha_{\beta\gamma}
c(f_\beta)c(f_\gamma)\nabla^{\cE}_{f_\alpha}]
\\ =& - \frac{1}{2}\sum_{\alpha,\gamma} [
c(f_\alpha)c(f_\gamma)\nabla^{\cE}_{\nabla_{f_\alpha} f_\gamma}+
\sum_{\beta,\gamma}
c(f_\beta)c(f_\gamma)\nabla^{\cE}_{\nabla_{f_\beta} f_\gamma}] \\ =&
- \sum_{\alpha,\beta}
c(f_\alpha)c(f_\beta)\nabla^{\cE}_{\nabla_{f_\alpha} f_\beta}.
\end{aligned}
\end{multline*}
From the last three identities, we get
\begin{multline*}
(D^\prime_\cE)^2 =-\sum_{\alpha} (\nabla^{\cE}_{f_\alpha})^2 +
\frac{1}{2}\sum_{\alpha,\beta} c(f_\beta)c(f_\alpha)
R^{\cE}(f_\beta, f_\alpha)\\ + \frac{1}{2}\sum_{\alpha,\beta}
c(f_\beta)c(f_\alpha) \nabla^{\cE}_{[f_\beta, f_\alpha]} -
\sum_{\alpha,\beta}
c(f_\alpha)c(f_\beta)\nabla^{\cE}_{\nabla_{f_\alpha} f_\beta}.
\end{multline*}
Consider the last two terms in this identity. Using (\ref{e:R}),
we get
\begin{multline*}
\frac{1}{2}\sum_{\alpha,\beta} c(f_\beta)c(f_\alpha)
\nabla^{\cE}_{[f_\beta, f_\alpha]} - \sum_{\alpha,\beta}
c(f_\alpha)c(f_\beta)\nabla^{\cE}_{\nabla_{f_\alpha} f_\beta} \\
\begin{aligned}
= & \frac{1}{2}\sum_{\alpha,\beta} c(f_\beta)c(f_\alpha)
(\nabla^{\cE}_{\nabla_{f_\beta}f_\alpha}
-\nabla^{\cE}_{\nabla_{f_\alpha}f_\beta} -
\nabla^{\cE}_{\cR(f_\beta, f_\alpha)}) \\ & - \sum_{\alpha,\beta}
c(f_\alpha)c(f_\beta)\nabla^{\cE}_{\nabla_{f_\alpha} f_\beta}\\
= & - \frac{1}{2}\sum_{\alpha,\beta}
(c(f_\alpha)c(f_\beta)+c(f_\beta)c(f_\alpha))
\nabla^{\cE}_{\nabla_{f_\alpha}f_\beta}\\ & -
\frac{1}{2}\sum_{\alpha,\beta} c(f_\beta)c(f_\alpha)
\nabla^{\cE}_{\cR(f_\beta, f_\alpha)}\\ = &\sum_{\alpha}
\nabla^{\cE}_{\nabla_{f_\alpha}f_\alpha} -
\frac{1}{2}\sum_{\alpha,\beta} c(f_\beta)c(f_\alpha)
\nabla^{\cE}_{\cR(f_\beta, f_\alpha)}.
\end{aligned}
\end{multline*}
By (\ref{e:nabla}), we also get
\begin{equation}
\label{e:dd} \sum_{\alpha=1}^q (\nabla^{\cE}_{f_\alpha})^*
\nabla^{\cE}_{f_\alpha}
=-\sum_{\alpha=1}^q(\nabla^{\cE}_{f_\alpha})^2+\nabla^{\cE}_\tau+
\nabla^{\cE}_{\sum_\alpha\nabla_{f_\alpha}f_\alpha}.
\end{equation}
Thus, we arrive at the formula
\begin{equation}\label{e:sq1}
(D^\prime_\cE)^2=\sum_{\alpha=1}^q (\nabla^{\cE}_{f_\alpha})^*
\nabla^{\cE}_{f_\alpha} - \nabla^{\cE}_\tau
+\frac{1}{2}\sum_{\alpha,\beta}
c(f_\alpha)c(f_\beta)[R^{\cE}(f_\alpha,f_\beta)-\nabla^{\cE}_{\cR(f_\alpha,f_\beta)}].
\end{equation}

Now we turn to $(D_\cE)^2$:
\begin{multline*}
(D_\cE)^2 = (D^\prime_\cE)^2 - \frac12 [\sum_{\alpha=1}^q
(c(f_\alpha)c(\tau)+ c(\tau)c(f_\alpha))\nabla^{\cE}_{f_\alpha}\\ -
\frac12 \sum_{\alpha=1}^q c(f_\alpha)c(\nabla_{f_\alpha}\tau)
-\frac14\|\tau\|^2
\\ = (D^\prime_\cE)^2 + \nabla^{\cE}_{\tau} - \frac12
\sum_{\alpha=1}^q c(f_\alpha)c(\nabla_{f_\alpha}\tau)
-\frac14\|\tau\|^2.
\end{multline*}
Taking into account (\ref{e:sq1}), we get
\begin{equation}\label{e:sq2}
\begin{split}
 (D_\cE)^2 = & \sum_{\alpha=1}^q (\nabla^\cE_{f_\alpha})^*
\nabla^\cE_{f_\alpha} - \frac12 \sum_{\alpha=1}^q
c(f_\alpha)c(\nabla_{f_\alpha}\tau) -\frac14\|\tau\|^2
\\ & +\frac{1}{2}\sum_{\alpha,\beta}
c(f_\alpha)c(f_\beta)[R^{\cE}(f_\alpha,f_\beta)-\nabla^\cE_{\cR(f_\alpha,f_\beta)}].
\end{split}
\end{equation}
Finally, we use the formula
\[
\frac{1}{2}\sum_{\alpha,\beta}
c(f_\alpha)c(f_\beta)R^{\cE}(f_\alpha,f_\beta)=\frac{K}{4}
+\frac{1}{2}\sum_{\alpha,\beta}
c(f_\alpha)c(f_\beta)R^{\cE/S}(f_\alpha,f_\beta),
\]
that completes the proof of Theorem~\ref{t:Lich}.
\medskip\par
There is a natural action of $Cl(Q_x)$ on $\Lambda Q_x$ given by the
formula
\begin{equation}
\label{e:cliff} c(f)=\varepsilon_{f^*}-i_{f}, \quad f\in Q_x,
\end{equation}
where $\varepsilon_{f^*}$ denotes the exterior product by the
covector $f^*\in Q_x^*$ dual to $f$, $i_f$ the interior product by
$f$.

Recall that the symbol map $\sigma : Cl(Q_x) \to \Lambda Q_x$ is
defined by
\[
\sigma (a)=c(a)1, \quad a\in Cl(Q_x)
\]
and the quantization map ${\mathbf c} = \sigma^{-1} : \Lambda Q_x
\to Cl(Q_x)$ is given by
\[
{\mathbf c}(f_{i_1}\wedge f_{i_2}\wedge \ldots f_{i_k})=
c(f_{i_1})c(f_{i_2}) \ldots c(f_{i_k}),
\]
where $\{f_1, f_2, \ldots , f_q\}$ is an orthonormal base in $Q_x$.
These maps satisfy
\begin{equation}\label{e:sigma}
\sigma({\mathbf c}(v){\mathbf c}(\omega))=c(v)\omega,\quad v\in Q_x,
\quad \omega \in \Lambda(Q_x).
\end{equation}
So, for any $\omega_1 \in \Lambda^iQ_x$ and $\omega_2 \in
\Lambda^jQ_x$ we have
\[
\sigma({\mathbf c}(\omega_1){\mathbf c}(\omega_2))= \omega_1\wedge
\omega_2 \mod \Lambda^{i+j-2}Q_x.
\]

By (\ref{e:cliff}) and (\ref{e:sigma}), we have
\[
\sigma(\sum_{\alpha=1}^q c(f_\alpha)c(\nabla_{f_\alpha}\tau))
=\sum_{\alpha=1}^q c(f_\alpha)\nabla_{f_\alpha}\tau
=\sum_{\alpha=1}^q
(\varepsilon_{f^*_\alpha}-i_{f_\alpha})\nabla_{f_\alpha}\tau.
\]
Recall the following lemma.
\begin{lem}\label{l:dH}
Let $f_1, f_2, \ldots, f_q$ be a local orthonormal basis of $T^HM$
and $f^*_1, f^*_2, \ldots, f^*_q$ be the dual basis of $T^HM^*$.
Then on $C^\infty(M,\Lambda T^HM^{*})$ we have
\[
d_H=\sum_{\alpha=1}^q\varepsilon_{f^*_\alpha}
\nabla_{f_\alpha},\quad d^*_H=-\sum_{\alpha=1}^q i_{f_\alpha}
\nabla_{f_\alpha} + i_\tau.
\]
\end{lem}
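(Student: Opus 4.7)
The plan is to establish both identities by a direct local computation based on the defining properties of the transverse Levi-Civita connection $\nabla$, together with (\ref{e:R}) and (\ref{e:nabla}). The first identity will come out of a torsion-type computation on horizontal forms, and the second will then follow by taking formal adjoints.

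For the formula $d_H = \sum_\alpha \varepsilon_{f^*_\alpha}\nabla_{f_\alpha}$, I would first note that the right-hand side is independent of the choice of local orthonormal frame (the standard invariance argument, since $\nabla$ is a genuine connection). Then I would verify the identity on horizontal $1$-forms $\omega$: evaluating on a pair $(f_\alpha, f_\beta)$ and applying the Leibniz rule for $\nabla$ gives
\[
(\nabla_{f_\alpha}\omega)(f_\beta)-(\nabla_{f_\beta}\omega)(f_\alpha)=f_\alpha\omega(f_\beta)-f_\beta\omega(f_\alpha)-\omega(\nabla_{f_\alpha}f_\beta-\nabla_{f_\beta}f_\alpha).
\]
By (\ref{e:R}), $\nabla_{f_\alpha}f_\beta - \nabla_{f_\beta}f_\alpha = [f_\alpha, f_\beta] + \cR(f_\alpha, f_\beta)$. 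Since $\cR$ takes values in $T\cF$ and $\omega\in T^HM^*$ annihilates $T\cF$, the $\cR$-term drops out and one recovers $d\omega(f_\alpha, f_\beta)$. The identity then extends to arbitrary degree by the standard argument that both sides are degree-$+1$ anti-derivations on $C^\infty(M,\Lambda T^HM^*)$ agreeing on functions and on $1$-forms.

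For the adjoint formula, the strategy is to take the formal adjoint term by term using $(\varepsilon_{f^*_\alpha})^* = i_{f_\alpha}$ and (\ref{e:nabla}):
\[
d^*_H = \sum_\alpha (\nabla_{f_\alpha})^* i_{f_\alpha} = -\sum_\alpha \nabla_{f_\alpha} i_{f_\alpha} - \sum_\alpha (\operatorname{div} f_\alpha)\, i_{f_\alpha}.
\]
Commuting the connection past the interior product via the derivation identity $\nabla_{f_\alpha} i_{f_\alpha} = i_{\nabla_{f_\alpha}f_\alpha} + i_{f_\alpha}\nabla_{f_\alpha}$, and then substituting the explicit formula $\operatorname{div} f_\alpha = -g_M(\tau + \sum_\beta \nabla_{f_\beta}f_\beta, f_\alpha)$ already recorded just above the lemma, the contributions involving $\sum_\beta\nabla_{f_\beta}f_\beta$ cancel and the $\tau$-part reassembles into a single $+i_\tau$, leaving exactly $-\sum_\alpha i_{f_\alpha}\nabla_{f_\alpha} + i_\tau$.

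The main subtlety I foresee lies in the extension-to-higher-degree step in the first part: one must verify that the $\cR(f_\alpha, f_\beta)$-contribution continues to drop out of a global Cartan-type formula. This is in fact automatic, because any element of $\Lambda T^HM^*$ is by construction annihilated by contraction with vectors in $T\cF$; the anti-derivation argument then reduces the general case to the one-form computation above. Aside from this point, the proof is a careful bookkeeping exercise using only (\ref{e:R}), (\ref{e:nabla}), and the standard derivation identities.
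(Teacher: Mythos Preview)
Your argument is correct. The paper itself gives no proof of this lemma: it is introduced with ``Recall the following lemma'' and then simply used, so there is nothing to compare against. Your approach---checking the first identity on functions and horizontal $1$-forms via (\ref{e:R}) and extending by the anti-derivation property, then obtaining the second identity by taking formal adjoints with (\ref{e:nabla}) and the explicit formula for $\operatorname{div} f_\alpha$---is the standard one and goes through without difficulty. The cancellation you describe in the adjoint computation is exactly right: the term $-\sum_\alpha i_{\nabla_{f_\alpha} f_\alpha}$ coming from the commutator $[\nabla_{f_\alpha}, i_{f_\alpha}]=i_{\nabla_{f_\alpha} f_\alpha}$ cancels against the $i_{\sum_\beta \nabla_{f_\beta} f_\beta}$ produced by the divergence term, leaving $-\sum_\alpha i_{f_\alpha}\nabla_{f_\alpha}+i_\tau$.
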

By Lemma~\ref{l:dH}, we have
\[
\sigma(\sum_{\alpha=1}^q c(f_\alpha)c(\nabla_{f_\alpha}\tau)) =
d_H\tau +d^*_H \tau - \|\tau\|^2.
\]

Assume that the bundle-like metric $g_M$ on $M$ satisfies the
assumption: the mean curvature form $\tau$ is a basic one-form. As
shown by Dominguez \cite{Dominguez}, such a bundle-like metric
exists for any Riemannian foliation. Under this assumption, we
have \cite{Kamber-To} (see also \cite{Tondeur-book}): $d\tau=0$.
This fact implies that
\[
\sigma(\sum_{\alpha=1}^q c(f_\alpha)c(\nabla_{f_\alpha}\tau)) =
d^*_H \tau - \|\tau\|^2 \in C^\infty(M,\Lambda^0T^*M)=C^\infty(M),
\]
and, therefore,
\[
\sum_{\alpha=1}^q c(f_\alpha)c(\nabla_{f_\alpha}\tau) = d^*_H \tau
- \|\tau\|^2.
\]
So we come to the following consequence (cf. \cite{G-K91}):

\begin{thm}
Assume that the bundle-like metric $g_M$ on $M$ satisfies the
assumption: the mean curvature form $\tau$ is a basic one-form.
Then we have:
\begin{multline*}
(D_\cE)^2 = \sum_{\alpha=1}^q (\nabla^\cE_{f_\alpha})^*
\nabla^\cE_{f_\alpha} - \frac12 d^*_H \tau + \frac14\|\tau\|^2
\\  + \frac{K}{4} +\frac{1}{2}\sum_{\alpha,\beta}
c(f_\alpha)c(f_\beta)[R^{\cE/S}(f_\alpha,f_\beta)-\nabla_{\cR(f_\alpha,f_\beta)}],
\end{multline*}
where $f_1,\ldots, f_q$ is a local orthonormal basis of $T^HM$.
\end{thm}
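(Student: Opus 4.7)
The plan is to start from Theorem~\ref{t:Lich} and modify only the $\tau$-dependent terms. Comparing the two formulas, every summand on the right-hand side matches except that $-\tfrac12\sum_\alpha c(f_\alpha)c(\nabla_{f_\alpha}\tau)-\tfrac14\|\tau\|^2$ must become $-\tfrac12 d^*_H\tau+\tfrac14\|\tau\|^2$. So the whole task reduces to proving the Clifford identity
\[
\sum_{\alpha=1}^q c(f_\alpha)\,c(\nabla_{f_\alpha}\tau)\;=\;d^*_H\tau-\|\tau\|^2 \qquad (*)
\]
as a pointwise identity in $C^\infty(M,Cl(Q))$ (or equivalently as endomorphisms of $\cE$), since substituting $(*)$ into Theorem~\ref{t:Lich} immediately gives the stated formula.

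To prove $(*)$, I would pass to the symbol picture. The element on the left lies in the degree-$\le 2$ filtration of $Cl(Q)$, and by applying (\ref{e:sigma}) twice,
\[
\sigma\Bigl(\sum_{\alpha} c(f_\alpha)c(\nabla_{f_\alpha}\tau)\Bigr) \;=\; \sum_\alpha c(f_\alpha)\,\nabla_{f_\alpha}\tau \;=\; \sum_\alpha\bigl(\varepsilon_{f^*_\alpha}-i_{f_\alpha}\bigr)\nabla_{f_\alpha}\tau.
\]
Lemma~\ref{l:dH} identifies the first sum with $d_H\tau$ and, using $i_\tau\tau=\|\tau\|^2$, the second with $-(d^*_H\tau-\|\tau\|^2)$. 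Hence
\[
\sigma\Bigl(\sum_{\alpha} c(f_\alpha)c(\nabla_{f_\alpha}\tau)\Bigr) \;=\; d_H\tau + d^*_H\tau - \|\tau\|^2.
\]

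Now I would invoke the basic assumption. By Dominguez's theorem the bundle-like metric can be chosen so that $\tau$ is basic, and then Kamber--Tondeur gives $d\tau=0$; restricting to horizontal directions yields $d_H\tau=0$. Therefore the symbol reduces to the scalar function $d^*_H\tau-\|\tau\|^2\in\Lambda^0 Q^*$. Since $\sigma$ is a linear isomorphism $Cl(Q_x)\to\Lambda Q_x$ that respects the natural filtration and its associated graded is the identity, an element of the degree-$\le 2$ filtration whose symbol lies in $\Lambda^0$ must itself be a scalar; it is then equal to that scalar. This yields the identity $(*)$ and completes the derivation.

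The only nontrivial step is the last one --- the transition from \emph{the symbol is a scalar} to \emph{the Clifford element is a scalar}. The cleanest way to justify it is to expand $c(f_\alpha)c(\nabla_{f_\alpha}\tau)$ via the Clifford relations $c(f_\alpha)c(f_\beta)+c(f_\beta)c(f_\alpha)=-2\delta_{\alpha\beta}$, write the sum as a scalar part plus a genuine bivector part, and observe that $\sigma$ sends the bivector part injectively into $\Lambda^2 Q^*$; its vanishing there forces it to vanish in $Cl(Q)$. Once $(*)$ is secured, the assembly into the claimed form of $(D_\cE)^2$ is purely arithmetic.
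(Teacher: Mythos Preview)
Your proposal is correct and follows essentially the same route as the paper: reduce to Theorem~\ref{t:Lich}, apply the symbol map together with Lemma~\ref{l:dH} to the $\tau$-term, use the basic hypothesis and Kamber--Tondeur to kill $d_H\tau$, and then pull the resulting scalar back through $\sigma^{-1}$. One small wording point: Dominguez's theorem is cited in the paper only to guarantee that such a metric \emph{exists}; in the present statement $\tau$ basic is a standing hypothesis, so you should invoke it directly rather than appeal to Dominguez.
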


\section{Transversal Bochner formula}\label{s:Bochner}
In this Section, we derive the Lichnerowicz formula for the
transverse Laplacian on a compact manifold $M$ equipped with a
Riemannian foliation $\cF$, which can be naturally called a Bochner
formula.

\subsection{The transverse signature and Laplace operators}
Suppose that $(M,{\mathcal F})$ is a compact Riemannian foliated
manifold equipped with a bundle-like metric $g_M$.  The
decomposition (\ref{e:decomp}) induces a bigrading on $\Lambda
T^{*}M$: $$ \Lambda^k
T^{*}M=\bigoplus_{i=0}^{k}\Lambda^{i,k-i}T^{*}M, $$ where
\[
\Lambda^{i,j}T^{*}M=\Lambda^{i}T\cF^{*}\otimes \Lambda^{j} T^HM^{*}.
\]

In this bigrading, the de Rham differential $d$ can be written as
$$ d=d_F+d_H+\theta, $$ where $d_F$ and $d_H$ are first order
differential operators (the tangential de Rham differential and the
transversal de Rham differential accordingly), and $\theta$ is a
zero order differential operator.

The transverse signature operator is a first order differential
operator in $C^{\infty}(M,\Lambda T^HM^{*})$ given by
\[
D_H=d_H + d^*_H,
\]
and the transversal Laplacian is a second order transversally
elliptic differential operator in $C^{\infty}(M,\Lambda T^HM^{*})$
given by
\[
\Delta_H=d_Hd^*_H+d^*_Hd_H.
\]

\begin{thm}\label{t:Bochner}
Let $f_1,\ldots, f_q$ be a local orthonormal basis of $T^HM$. Then
we have the following formula
\[
\Delta_H=\sum_{\alpha=1}^q \nabla^*_{f_\alpha} \nabla_{f_\alpha}+
\sum_{\alpha=1}^q \varepsilon_{f^*_\alpha}
i_{\nabla_{f_\alpha}\tau} - \sum_{\alpha,\beta}
\varepsilon_{f_\alpha} i_{f_\beta}\left(R(f_\alpha, f_\beta) -
\nabla_{\cR(f_\alpha,f_\beta)}\right).
\]
\end{thm}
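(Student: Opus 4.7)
The plan is to prove the formula by direct computation, following the same strategy used in Section~\ref{s:Lich} for the Lichnerowicz formula. The key observation is that $\Lambda T^HM^*\cong \Lambda Q^*$ is a transverse Clifford module with Clifford action $c(f)=\varepsilon_{f^*}-i_f$ from~(\ref{e:cliff}) and Clifford connection given by the transverse Levi-Civita connection $\nabla$ extended as a derivation to forms. By Lemma~\ref{l:dH}, the associated operator $D^\prime_\cE=\sum_\alpha c(f_\alpha)\nabla_{f_\alpha}$ equals $d_H+d_H^*-i_\tau$, so in principle one could derive the result as a corollary of Theorem~\ref{t:Lich} after a zero-order correction; however, since $d_H^2$ and $(d_H^*)^2$ do not vanish in general, it is cleaner to attack $\Delta_H=d_Hd_H^*+d_H^*d_H$ directly.

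First I would substitute $d_H=\sum_\alpha\varepsilon_{f^*_\alpha}\nabla_{f_\alpha}$ and $d_H^*=-\sum_\alpha i_{f_\alpha}\nabla_{f_\alpha}+i_\tau$ into $\Delta_H$ and expand. To reorder the factors I would use the Leibniz rules $\nabla_{f_\alpha}\varepsilon_{f^*_\beta}=\varepsilon_{f^*_\beta}\nabla_{f_\alpha}+\varepsilon_{(\nabla_{f_\alpha}f_\beta)^*}$ and $\nabla_{f_\alpha}i_{f_\beta}=i_{f_\beta}\nabla_{f_\alpha}+i_{\nabla_{f_\alpha}f_\beta}$, together with the anticommutation $\varepsilon_{f^*_\alpha}i_{f_\beta}+i_{f_\beta}\varepsilon_{f^*_\alpha}=\delta_{\alpha\beta}$. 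This splits the second-order piece into its symmetric and antisymmetric parts in $(\alpha,\beta)$. The symmetric part, once the correction terms produced by the Leibniz rules and the $i_\tau$-pieces are collected, is exactly the identity~(\ref{e:dd}) used in Section~\ref{s:Lich}, and yields the rough Laplacian $\sum_\alpha \nabla^*_{f_\alpha}\nabla_{f_\alpha}$ together with a residual $-\nabla_\tau$.

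The antisymmetric part produces $-\sum_{\alpha,\beta}\varepsilon_{f^*_\alpha}i_{f_\beta}[\nabla_{f_\alpha},\nabla_{f_\beta}]$, and the torsion-freeness identity~(\ref{e:R}) together with $[\nabla_{f_\alpha},\nabla_{f_\beta}]-\nabla_{[f_\alpha,f_\beta]}=R(f_\alpha,f_\beta)$ converts this into the asserted curvature term $-\sum_{\alpha,\beta}\varepsilon_{f^*_\alpha}i_{f_\beta}\bigl[R(f_\alpha,f_\beta)-\nabla_{\cR(f_\alpha,f_\beta)}\bigr]$, exactly parallel to how the $\cR$-correction appears in~(\ref{e:sq1}). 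Finally, the remaining $\tau$-pieces come from $\sum_\alpha\varepsilon_{f^*_\alpha}\nabla_{f_\alpha}i_\tau+\sum_\beta i_\tau\varepsilon_{f^*_\beta}\nabla_{f_\beta}$; applying Leibniz to $\nabla_{f_\alpha}i_\tau$ contributes $\sum_\alpha\varepsilon_{f^*_\alpha}i_{\nabla_{f_\alpha}\tau}$, while the remaining $\nabla_\tau$-type terms precisely absorb the residual $-\nabla_\tau$ left over from the symmetric part, leaving the stated formula.

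The main obstacle is the bookkeeping of the antisymmetric curvature piece: unlike the manipulations in Section~\ref{s:Lich}, where the full Clifford anticommutator $c(f_\alpha)c(f_\beta)+c(f_\beta)c(f_\alpha)=-2\delta_{\alpha\beta}$ kills the symmetric part in one step, here the split between $\varepsilon$ and $i$ is only partially antisymmetric, so the symmetric/antisymmetric decomposition in $(\alpha,\beta)$ must be tracked carefully on both sides before $R$ and $\cR$ can be identified. Once this is done in the same spirit as the corresponding step of Section~\ref{s:Lich}, the integrability tensor $\cR$ enters precisely via~(\ref{e:R}), completing the formula.
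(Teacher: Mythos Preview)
Your proposal is correct and coincides with the paper's second (direct) proof of Theorem~\ref{t:Bochner}: expand $\Delta_H$ via Lemma~\ref{l:dH}, split the second-order term into symmetric and antisymmetric parts, convert the antisymmetric commutator into $R$ and $\cR$ via~(\ref{e:R}), and absorb the first-order pieces with~(\ref{e:dd}) and the Leibniz rule on $i_\tau$. The bookkeeping you flag as the main obstacle is exactly what the paper carries out, and your identification $D'_{\Lambda T^HM^*}=d_H+d_H^*-i_\tau$ agrees with~(\ref{e:sign}).

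One remark: the alternative route you mention---deriving the formula from Theorem~\ref{t:Lich}---is not merely possible in principle; the paper actually does it as its \emph{first} proof. The nonvanishing of $d_H^2$ and $(d_H^*)^2$ that you cite as the reason to avoid this route is handled there by computing them explicitly (Lemma~\ref{l:sq}): $d_H^2=-\tfrac12\sum\varepsilon_{f_\alpha}\varepsilon_{f_\beta}\nabla_{\cR(f_\alpha,f_\beta)}$ and similarly for $(d_H^*)^2$, so they contribute precisely the $\varepsilon\varepsilon$ and $ii$ pieces that, subtracted from the full Clifford expression in~(\ref{e:sq2}), leave only the $\varepsilon i$ terms of the Bochner formula. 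That approach buys a conceptual link to the Lichnerowicz formula at the cost of the auxiliary lemma; your direct computation avoids the lemma but repeats the symmetric/antisymmetric split from scratch.
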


We give two proofs of Theorem~\ref{t:Bochner}. The first proof
derives the theorem from Theorem~\ref{t:Lich}, whereas the second
proof is direct and makes no use of Theorem~\ref{t:Lich}.

\subsection{The first proof}
Consider a transverse Clifford module $\cE=\Lambda T^HM^*$ which
equipped with a natural leafwise flat Clifford connection and the
corresponding transverse Dirac operator $D_{\Lambda T^HM^*}$ acting
in $C^\infty(M,\Lambda T^HM^*)$. The Clifford action of $Cl(Q)$ on
$\cE$ is defined by the formula (\ref{e:cliff}). By Lemma~\ref{l:dH}
and (\ref{e:cliff}), we have
\begin{equation}\label{e:sign}
 D_{\Lambda T^HM^*} =
d_H+d^*_H-\frac12(\varepsilon_{\tau^*}+i_{\tau}).
\end{equation}
By (\ref{e:sign}), it follows that
\begin{align*}
\Delta_H= & \left(D_{\Lambda
T^HM^*}+\frac12(\varepsilon_{\tau^*}+i_{\tau})\right)^2 -
d^2_H-(d^*_H)^2 \\ = & D_{\Lambda T^HM^*}^2- d^2_H-(d^*_H)^2
+\frac12\Big(D_{\Lambda T^HM^*}(\varepsilon_{\tau^*}+i_{\tau})
+(\varepsilon_{\tau^*}+i_{\tau})D_{\Lambda T^HM^*}\Big)\\ & +\frac14
(\varepsilon_{\tau^*}i_{\tau}+i_{\tau}\varepsilon_{\tau^*}).
\end{align*}
By Theorem~\ref{t:Lich}, it follows that
\begin{multline*}
D_{\Lambda T^HM^*}^2 = \sum_{\alpha=1}^q \nabla^*_{f_\alpha}
\nabla_{f_\alpha} - \frac12 \sum_{\alpha=1}^q
(\varepsilon_{f_\alpha} - i_{f_\alpha})
(\varepsilon_{\nabla_{f_\alpha}\tau} - i_{\nabla_{f_\alpha}\tau})
-\frac14\|\tau\|^2
\\  +\frac{1}{2}\sum_{\alpha,\beta}
(\varepsilon_{f_\alpha} - i_{f_\alpha}) (\varepsilon_{f_\beta} -
i_{f_\beta}) [R^{\Lambda
T^HM^{*}}(f_\alpha,f_\beta)-\nabla_{\cR(f_\alpha,f_\beta)}].
\end{multline*}
As in the classical case, we have
\begin{gather}\label{e:eR}
\frac{1}{2}\sum_{\alpha,\beta} \varepsilon_{f_\alpha}
\varepsilon_{f_\beta} R^{\Lambda T^HM^{*}}(f_\alpha,f_\beta) =
0,\\ \label{e:iR} \frac{1}{2}\sum_{\alpha,\beta} i_{f_\alpha}
i_{f_\beta} R^{\Lambda T^HM^{*}}(f_\alpha,f_\beta) = 0.
\end{gather}

The following lemma seems to be well known, but we didn't find an
appropriate reference.

\begin{lemma}\label{l:sq}
We have
\[
d^2_H=-\frac{1}{2}\sum_{\alpha,\beta}
\varepsilon_{f_\alpha}\varepsilon_{f_\beta}\nabla_{\cR(f_\alpha,f_\beta)}
\]
and
\[
(d^*_H)^2=-\frac{1}{2}\sum_{\alpha,\beta}
i_{f_\alpha}i_{f_\beta}\nabla_{\cR(f_\alpha,f_\beta)}-\sum_\alpha
i_{f_\alpha} i_{\nabla_{f_\alpha}\tau}.
\]
\end{lemma}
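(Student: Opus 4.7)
The plan is a single template applied to both formulas: expand each operator using Lemma~\ref{l:dH}, distribute the connection via Leibniz, work at an arbitrary point $x_0$ using a horizontal orthonormal frame synchronous at $x_0$ (so that $\nabla_{f_\alpha}f_\beta(x_0)=0$), identify $[\nabla_{f_\alpha},\nabla_{f_\beta}] = R(f_\alpha,f_\beta) + \nabla_{[f_\alpha,f_\beta]}$, kill the curvature piece using the vanishing identities (\ref{e:eR}) or (\ref{e:iR}), and finally use (\ref{e:R}) to replace $[f_\alpha,f_\beta]$ by $-\cR(f_\alpha,f_\beta)$ at $x_0$.

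Concretely, for $d_H^2 = \sum_{\alpha,\beta}\varepsilon_{f_\alpha^*}\nabla_{f_\alpha}\varepsilon_{f_\beta^*}\nabla_{f_\beta}$, Leibniz splits the expression into a Christoffel-type cross term $\sum_{\alpha,\beta}\varepsilon_{f_\alpha^*}\varepsilon_{\nabla_{f_\alpha}f_\beta^*}\nabla_{f_\beta}$, which vanishes at $x_0$ by synchrony, plus $\sum_{\alpha,\beta}\varepsilon_{f_\alpha^*}\varepsilon_{f_\beta^*}\nabla_{f_\alpha}\nabla_{f_\beta}$. Antisymmetrizing the latter in $\alpha,\beta$ and invoking (\ref{e:eR}) to eliminate the curvature part leaves $\tfrac12\sum\varepsilon_{f_\alpha^*}\varepsilon_{f_\beta^*}\nabla_{[f_\alpha,f_\beta]}$, which by (\ref{e:R}) equals $-\tfrac12\sum\varepsilon_{f_\alpha^*}\varepsilon_{f_\beta^*}\nabla_{\cR(f_\alpha,f_\beta)}$ at $x_0$. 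Since $x_0$ is arbitrary and the identity is tensorial in the frame, it holds globally.

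For $(d_H^*)^2$, write $d_H^* = -\sum_\alpha i_{f_\alpha}\nabla_{f_\alpha} + i_\tau$ and square. The double-$\nabla$ block is handled exactly as above, with $i$'s replacing $\varepsilon$'s and (\ref{e:iR}) replacing (\ref{e:eR}), producing $-\tfrac12\sum_{\alpha,\beta} i_{f_\alpha}i_{f_\beta}\nabla_{\cR(f_\alpha,f_\beta)}$. The $(i_\tau)^2$ term vanishes by antisymmetry of the interior product. The two mixed blocks $-\sum_\alpha i_{f_\alpha}\nabla_{f_\alpha}i_\tau$ and $-i_\tau\sum_\alpha i_{f_\alpha}\nabla_{f_\alpha}$ combine after commuting $\nabla_{f_\alpha}$ past $i_\tau$: the Leibniz contribution yields $-\sum_\alpha i_{f_\alpha}i_{\nabla_{f_\alpha}\tau}$, while the remaining pieces cancel because interior products anticommute, $i_{f_\alpha}i_\tau + i_\tau i_{f_\alpha}=0$. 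This gives precisely the second formula.

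The main obstacle is bookkeeping: tracking signs in the expansion of $d_H^*$ squared and in the various (anti)commutators of $\varepsilon$'s, $i$'s and $\nabla$'s, and justifying the synchronous-frame reduction (both sides of each identity are first-order differential operators whose difference is tensorial, so checking equality at an arbitrary $x_0$ in an adapted frame suffices). A secondary point is that the identities (\ref{e:eR}) and (\ref{e:iR}) rest on the first Bianchi identity for the transverse curvature $R$; this is inherited from the local transverse model because the horizontal torsion $\cR$ of $\nabla$ takes values in $T\cF$ and therefore does not contribute to the cyclic sum $R(f_\alpha,f_\beta)f_\gamma + R(f_\beta,f_\gamma)f_\alpha + R(f_\gamma,f_\alpha)f_\beta$ on the $T^HM$-component.
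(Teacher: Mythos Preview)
Your argument is correct and follows essentially the same route as the paper: expand via Lemma~\ref{l:dH}, antisymmetrize the double-$\nabla$ block, kill the curvature piece with (\ref{e:eR})/(\ref{e:iR}), convert $[f_\alpha,f_\beta]$ into $-\cR(f_\alpha,f_\beta)$ via (\ref{e:R}), and handle the $i_\tau$ cross terms by anticommutation. The only difference is cosmetic: the paper keeps the Christoffel cross term $\sum \varepsilon_{f_\alpha^*}\varepsilon_{\nabla_{f_\alpha}f_\beta^*}\nabla_{f_\beta}$ and reduces it directly using the metric-compatibility relation $a^\gamma_{\alpha\beta}=-a^\beta_{\alpha\gamma}$, whereas you eliminate it at a point with a synchronous transverse frame and then invoke frame-independence.
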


\begin{proof}
(1) By Lemma~\ref{l:dH}, we have
\[
d^2_H\omega = \sum_{\alpha, \beta} f_\alpha \wedge
\nabla_{f_\alpha}f_\beta\wedge \nabla_{f_\beta}\omega +
\sum_{\alpha, \beta} f_\alpha \wedge f_\beta\wedge
\nabla_{f_\alpha} \nabla_{f_\beta}\omega.
\]
As above, write $\nabla_{f_\alpha}f_\beta=\sum_\gamma
a^\gamma_{\alpha\beta}f_\gamma$, where $a^\gamma_{\alpha\beta}
=-a^\beta_{\alpha\gamma}$. Then, for the first term, we have
\begin{align*}
\sum_{\alpha, \beta} f_\alpha \wedge
\nabla_{f_\alpha}f_\beta\wedge \nabla_{f_\beta}\omega  & =
\sum_{\alpha, \beta,\gamma} a^\gamma_{\alpha\beta} f_\alpha \wedge
f_\gamma\wedge \nabla_{f_\beta}\omega \\ & = - \sum_{\alpha,
\beta,\gamma} a^\beta_{\alpha\gamma} f_\alpha \wedge
f_\gamma\wedge \nabla_{f_\beta}\omega
\\ & = - \sum_{\alpha,\gamma} f_\alpha \wedge f_\gamma\wedge
\nabla_{\nabla_{f_\alpha}f_\gamma}\omega
\\ & = - \frac12 \sum_{\alpha,\gamma} f_\alpha \wedge f_\gamma\wedge
(\nabla_{\nabla_{f_\alpha}f_\gamma}-\nabla_{\nabla_{f_\gamma}f_\alpha})\omega
\\ & = - \frac12 \sum_{\alpha,\gamma} f_\alpha \wedge f_\gamma\wedge
(\nabla_{[f_\alpha,f_\gamma]+\cR(f_\alpha,f_\gamma)})\omega
\end{align*}
For the second term, we use the definition of the curvature $R$
and (\ref{e:eR}):
\begin{align*}
\sum_{\alpha, \beta} f_\alpha \wedge f_\beta\wedge
\nabla_{f_\alpha} \nabla_{f_\beta}\omega & =\frac12 \sum_{\alpha,
\beta} f_\alpha \wedge f_\beta\wedge (\nabla_{f_\alpha}
\nabla_{f_\beta}-\nabla_{f_\beta} \nabla_{f_\alpha})\omega \\ &
=\frac12 \sum_{\alpha, \beta} f_\alpha \wedge f_\beta\wedge
(\nabla_{[f_\alpha, f_\beta]}+R(f_\alpha,f_\beta))\omega
\\ & = \frac12 \sum_{\alpha, \beta} f_\alpha \wedge f_\beta\wedge
\nabla_{[f_\alpha, f_\beta]}\omega.
\end{align*}

(2) Similarly, using Lemma~\ref{l:dH}, we get
\[
(d^*_H)^2=\sum_{\alpha,\beta}^q i_{f_\alpha}
i_{\nabla_{f_\alpha}f_\beta} \nabla_{f_\beta} +
\sum_{\alpha,\beta}^q i_{f_\alpha} i_{f_\beta} \nabla_{f_\alpha}
\nabla_{f_\beta} - \sum_\alpha (i_\tau i_{f_\alpha}
\nabla_{f_\alpha}+i_{f_\alpha} \nabla_{f_\alpha}i_\tau).
\]
Repeating the same arguments as above, we obtain
\[
\sum_{\alpha,\beta}^q i_{f_\alpha} i_{\nabla_{f_\alpha}f_\beta}
\nabla_{f_\beta} + \sum_{\alpha,\beta}^q i_{f_\alpha} i_{f_\beta}
\nabla_{f_\alpha} \nabla_{f_\beta}=-\frac{1}{2}\sum_{\alpha,\beta}
i_{f_\alpha}i_{f_\beta}\nabla_{\cR(f_\alpha,f_\beta)}.
\]
For the third term, we have
\begin{align*}
\sum_\alpha (i_\tau i_{f_\alpha} \nabla_{f_\alpha}+i_{f_\alpha}
\nabla_{f_\alpha}i_\tau)& =\sum_\alpha \left((i_\tau i_{f_\alpha}
+i_{f_\alpha}i_\tau) \nabla_{f_\alpha}+ i_{f_\alpha}
i_{\nabla_{f_\alpha}\tau}\right)\\ & = \sum_\alpha i_{f_\alpha}
i_{\nabla_{f_\alpha}\tau}.
\end{align*}
\end{proof}

By (\ref{e:sq2}) and Lemma~\ref{l:sq}, it follows that
\begin{multline*}
D_{\Lambda T^HM^*}^2- d^2_H-(d^*_H)^2 = \sum_{\alpha=1}^q
\nabla^*_{f_\alpha} \nabla_{f_\alpha} - \frac12 \sum_{\alpha=1}^q
(\varepsilon_{f_\alpha} - i_{f_\alpha})
(\varepsilon_{\nabla_{f_\alpha}\tau} - i_{\nabla_{f_\alpha}\tau})
-\frac14\|\tau\|^2
\\ -\sum_{\alpha,\beta}
\varepsilon_{f_\alpha}i_{f_\beta} [R^{\Lambda
T^HM^{*}}(f_\alpha,f_\beta)-\nabla_{\cR(f_\alpha,f_\beta)}]+\sum_\alpha
i_{f_\alpha} i_{\nabla_{f_\alpha}\tau}.
\end{multline*}
Recall that $D_{\Lambda T^HM^*}$ is given by the formula
\[
D_{\Lambda T^HM^*}= \sum_{\alpha=1}^q (\varepsilon_{f_\alpha} -
i_{f_\alpha}) \left(\nabla^{\Lambda T^HM^{*}}_{f_\alpha}-\frac12
g_M(\tau, f_\alpha) \right)
\]
Using the identity
$(\varepsilon_{u}-i_{u})(\varepsilon_{v}+i_{v})+(\varepsilon_{v}+i_{v})(\varepsilon_{u}-i_{u})=0$
for any $u$ and $v$, we get
\begin{multline*}
D_{\Lambda T^HM^*}(\varepsilon_{\tau^*}+i_{\tau})
+(\varepsilon_{\tau^*}+i_{\tau})D_{\Lambda T^HM^*}\\
\begin{aligned}
& = \sum_{\alpha=1}^q (\varepsilon_{f_\alpha} - i_{f_\alpha})
\left[\nabla^{\Lambda T^HM^{*}}_{f_\alpha}-\frac12 g_M(\tau,
f_\alpha), \varepsilon_{\tau^*}+i_{\tau} \right] \\ & =
\sum_{\alpha=1}^q (\varepsilon_{f_\alpha} - i_{f_\alpha}) (
\varepsilon_{\nabla_{f_\alpha} \tau^*}+i_{\nabla_{f_\alpha}\tau}).
\end{aligned}
\end{multline*}
The above identities and the formula
$\varepsilon_{\tau^*}i_{\tau}+i_{\tau}\varepsilon_{\tau^*}=\|\tau\|^2$
immediately complete the proof.

\subsection{A direct proof}
Here we indicate a direct proof of Theorem~\ref{t:Bochner}. By
Lemma~\ref{l:dH}, we have
\[
d^*_Hd_H = -\sum_{\alpha,\beta} i_{f_\alpha}
\varepsilon_{\nabla_{f_\alpha} f^*_\beta} \nabla_{f_\beta}
-\sum_{\alpha,\beta} i_{f_\alpha} \varepsilon_{f^*_\beta}
\nabla_{f_\alpha} \nabla_{f_\beta} + \sum_{\alpha=1}^q i_\tau
\varepsilon_{f^*_\alpha} \nabla_{f_\alpha}
\]
and
\[
d_Hd^*_H = -\sum_{\alpha,\beta}
\varepsilon_{f_\alpha}i_{\nabla_{f_\alpha}f_\beta} \nabla_{f_\beta}
-\sum_{\alpha,\beta} \varepsilon_{f_\alpha} i_{f_\beta}
\nabla_{f_\alpha} \nabla_{f_\beta} + \sum_{\alpha=1}^q
\varepsilon_{f^*_\alpha} \nabla_{f_\alpha}i_\tau.
\]
From these identities, it follows that
\begin{align*}
\Delta_H = & -\sum_{\alpha,\beta} (i_{f_\alpha}
\varepsilon_{\nabla_{f_\alpha} f^*_\beta} +
\varepsilon_{f_\alpha}i_{\nabla_{f_\alpha}f_\beta})
\nabla_{f_\beta} -\sum_{\alpha,\beta} (i_{f_\alpha}
\varepsilon_{f^*_\beta}+\varepsilon_{f_\alpha} i_{f_\beta})
\nabla_{f_\alpha} \nabla_{f_\beta}\\ & + \sum_{\alpha=1}^q (i_\tau
\varepsilon_{f^*_\alpha} \nabla_{f_\alpha}
+\varepsilon_{f^*_\alpha} \nabla_{f_\alpha}i_\tau).
\end{align*}
Writing $\nabla_{f_\alpha}f_\beta=\sum_\gamma
a^\gamma_{\alpha\beta}f_\gamma$, where $a^\gamma_{\alpha\beta}
=-a^\beta_{\alpha\gamma}$, we get
\begin{align*}
\sum_{\alpha,\beta} (i_{f_\alpha} \varepsilon_{\nabla_{f_\alpha}
f^*_\beta} + \varepsilon_{f_\alpha}i_{\nabla_{f_\alpha}f_\beta})
\nabla_{f_\beta} = & \sum_{\alpha,\beta,\gamma}
a^\gamma_{\alpha\beta} (i_{f_\alpha} \varepsilon_{f_\gamma} +
\varepsilon_{f_\alpha}i_{f_\gamma}) \nabla_{f_\beta}\\  = &
-\sum_{\alpha,\beta,\gamma} a^\beta_{\alpha\gamma} (i_{f_\alpha}
\varepsilon_{f_\gamma} + \varepsilon_{f_\alpha}i_{f_\gamma})
\nabla_{f_\beta}\\  = & -\sum_{\alpha,\gamma} (i_{f_\alpha}
\varepsilon_{f_\gamma} + \varepsilon_{f_\alpha}i_{f_\gamma})
\nabla_{\nabla_{f_\alpha}f_\gamma} \\  = & -\sum_{\alpha,\gamma}
(i_{f_\alpha} \varepsilon_{f_\gamma} +
\varepsilon_{f_\gamma}i_{f_\alpha})
\nabla_{\nabla_{f_\alpha}f_\gamma}\\ & -\sum_{\alpha,\gamma} (
\varepsilon_{f_\alpha}i_{f_\gamma}-
\varepsilon_{f_\gamma}i_{f_\alpha})
\nabla_{\nabla_{f_\alpha}f_\gamma}\\  = & - \nabla_{\sum_{\alpha}
\nabla_{f_\alpha}f_\alpha} -\sum_{\alpha,\gamma}
\varepsilon_{f_\alpha}i_{f_\gamma}
\nabla_{\nabla_{f_\alpha}f_\gamma-\nabla_{f_\gamma}f_\alpha} \\  =
& - \nabla_{\sum_{\alpha} \nabla_{f_\alpha}f_\alpha}
-\sum_{\alpha,\gamma} \varepsilon_{f_\alpha}i_{f_\gamma}
\nabla_{[f_\alpha,f_\gamma]+\cR(f_\alpha,f_\gamma)}.
\end{align*}
We also have
\begin{align*}
\sum_{\alpha,\beta} (i_{f_\alpha}
\varepsilon_{f^*_\beta}+\varepsilon_{f_\alpha} i_{f_\beta})
\nabla_{f_\alpha} \nabla_{f_\beta} = & \sum_{\alpha,\beta}
(i_{f_\alpha} \varepsilon_{f^*_\beta}+\varepsilon_{f_\beta}
i_{f_\alpha}) \nabla_{f_\alpha} \nabla_{f_\beta}\\ & +
\sum_{\alpha,\beta} (\varepsilon_{f_\alpha}
i_{f_\beta}-\varepsilon_{f_\beta} i_{f_\alpha}) \nabla_{f_\alpha}
\nabla_{f_\beta}\\ = & \sum_{\alpha} (\nabla_{f_\alpha})^2 +
\sum_{\alpha,\beta} \varepsilon_{f_\alpha}
i_{f_\beta}(\nabla_{f_\alpha} \nabla_{f_\beta}- \nabla_{f_\beta}
\nabla_{f_\alpha})\\ = & \sum_{\alpha} (\nabla_{f_\alpha})^2 +
\sum_{\alpha,\beta} \varepsilon_{f_\alpha}
i_{f_\beta}(\nabla_{[f_\alpha,f_\beta]}+R(f_\alpha, f_\beta)).
\end{align*}
Taking into account (\ref{e:dd}), we immediately complete the proof
of Theorem~\ref{t:Bochner}.

\end{document}